\documentclass[12pt]{amsart}
\usepackage{mathrsfs}
\usepackage{latexsym,amsthm,amssymb,amsmath,amsfonts,graphicx,epstopdf,subfigure,float}
\usepackage{bbm}
\usepackage{pifont}
\usepackage{cite}
\usepackage{multirow}
\usepackage{appendix}
\usepackage{tikz,xcolor}

\usetikzlibrary{decorations.markings}
\usetikzlibrary{intersections}
\usetikzlibrary{calc}

\newtheorem{theorem}{Theorem}[section]
\newtheorem{thm}[theorem]{Theorem}

\newtheorem{lemma}{Lemma}[section]

\newtheorem{defi}{Definition}[section]

\numberwithin{equation}{section}

\def\R{\mathbb R}




 \setlength{\textwidth}{15.0cm}
 \setlength{\textheight}{22.0cm}
 \hoffset=-1cm
\pagestyle {plain}
 \usepackage{color}

\begin{document}

\title{Locally measure preserving property of bi-Lipschitz maps between Moran sets}

\author{Liang-yi Huang}
\address{College of Science, Wuhan University of Science and Technology, Wuhan, 430070, China}
\email{liangyihuang@wust.edu.cn}

\author{Shishuang Liu$^*$}
\address{Department of Mathematics and Statistics, Central China Normal University, Wuhan, 430079, China}
\email{shishuangliu@mails.ccnu.edu.cn}


\date{\today}

\thanks{{\bf 2010 Mathematics Subject Classification:}  28A80, 26A16\\
 {\indent\bf Key words and phrases:}\ Moran set, Measure preserving property }

\thanks{* Corresponding author.}

\begin{abstract}
In literature it is shown that bi-Lipschitz maps between self-similar sets or self-affine sets
enjoy a locally measure preserving property, namely, if $f:(E,\mu)\to (F,\nu)$ is a bi-Lipschitz map, then
the Radon-Nykodym derivative   $df^*\nu/d\mu$ is a constant function
on a subset $E'\subset E$ with $\mu(E')>0$, where $f^*\nu(\cdot)=\nu(f(\cdot))$.
Indeed, this measure preserving property plays an important role in Lipschitz classification of fractal sets. In this paper, we show that  such measure preserving property also holds for bi-Lipschitz maps between two Moran sets in a certain class.
\end{abstract}
\maketitle

\section{\textbf{Introduction}}
Let $(E, d_E, \mu)$ and $(F,d_F,\nu)$  be two metric spaces with  finite Borel measures $\mu$ and $\nu$, respectively.
We say these two spaces enjoy a \emph{universal locally measure preserving property},
if for any  bi-Lipschitz map   $f:E\to F$,
the Radon-Nykodym derivative
$$
\frac{df^*\nu}{d\mu}=\text{constant}
$$
on a subset $E'\subset E$ with $\mu(E')>0$, where
$$f^*\nu(\cdot)=\nu(f(\cdot)).$$

 Cooper and Pignataro  \cite{CP88}  proved that for  two self-similar sets $E,F\subset  \mathbb R$ satisfy the strong separation condition equipped with the Hausdorff measure (positive in its Hausdorff dimension), all monotone bi-Lipschitz maps
 enjoy the locally measure preserving bi-Lipschitz.  Later, Falconer and Marsh \cite{FM92} and Xi and Ruan \cite{XR08} showed that the same result holds for $E,F\subset \R^d$ without the orientation preserving assumption. Rao, Yang and Zhang \cite{RYZ21} showed the universal locally measure preserving property for Bedford-McMullen carpets equipped with the uniform Bernoulli measures.

This locally measure preserving property turns out to play a very important role in
Lipschitz classification  of self-similar sets and self-affine sets, see
 \cite{ML10,FM92,RRX06,RRW12,LL13,XY21,XR08,RWX14,RRW13,X10}.

Recall that two metric spaces $(X, d_X)$ and $(Y, d_Y)$ are said to be \emph{Lipschitz equivalent},
denoted by $(X,d_X)\sim (Y,d_Y)$, if there exists a map
$f:~X\rightarrow Y$ which is bi-Lipschitz, that is, there exists a constant $C>0$ such that
for all $x,y\in X$,
$$C^{-1}d_X(x,y)\leq d_Y(f(x),f(y))\leq C d_X(x,y).$$

The main goal of the present paper is to show that the universal locally measure preserving property
holds for a class of homogeneous Moran sets equipped with the uniform Bernoulli measures.

  Let us recall the definition of Moran set. Let $\{n_k\}_{k\ge 1}$ be a sequence of positive integers with $n_k\ge 2$.
Set $\Sigma^0=\emptyset$, and
$\Sigma^k=\prod_{j=1}^k \{0,\dots, n_{j}-1\}$ for $k\ge 1$. Denote  $\Sigma^*=\bigcup_{k\ge 0}\Sigma^k$.
For $\boldsymbol{\sigma}=(\sigma_1,\dots,\sigma_k)$ and $\boldsymbol{\tau}=(\tau_1,\dots,\tau_{s})$, let $\boldsymbol{\sigma}*\boldsymbol{\tau}=(\sigma_1,\dots,\sigma_k,\tau_1,\dots,\tau_{s})$ be their concatenation.
Let $J$ be a closed interval. A collection of closed subintervals $\{J_{\boldsymbol{\sigma}}: \boldsymbol{\sigma}\in \Sigma^*\}$ of $J$ is said to have  a \emph{Moran structure}, if\\
(i) $J_\emptyset=J$;\\
(ii)  for any $k\ge 1$, $\boldsymbol{\sigma}\in \Sigma^{k-1}$ and $i\in\{0,\dots,  n_k-1\}$, $J_{\boldsymbol{\sigma}*i}$ are subintervals of $J_{\boldsymbol{\sigma}}$ with disjoint interiors.

Let
\begin{equation*}
{\mathbf F}_k=\bigcup\limits_{\boldsymbol{\sigma}\in \Sigma^k}J_{\boldsymbol{\sigma}}, \quad F=\bigcap\limits_{k\ge 0}{\mathbf F}_k.
\end{equation*}
Then $F$ is a compact set and we call $F$ a \emph{Moran set} with   parameters $(J,\{n_k\}_{k\ge 1})$. Let $\boldsymbol{\sigma}\in \Sigma^k$.  We call $J_{\boldsymbol{\sigma}}$ a \emph{basic interval} of rank $k$, and   $F_{\boldsymbol{\sigma}}=J_{\boldsymbol{\sigma}}\cap F$ a \emph{cylinder} of rank $k$.

 We call $F$ a \emph{homogeneous Moran set} with parameters
 $(J,\{n_k\}_{k\ge 1},\{r_k\}_{k\ge 1})$, if in addition, $F$ satisfies that\\
(iii) for any $k\ge 1$, $\boldsymbol{\sigma}\in\Sigma^{k-1}$ and $0\leq i\leq n_k-1$,
$\frac{|J_{\boldsymbol{\sigma}*i}|}{|J_{\boldsymbol{\sigma}}|}=r_k,$ where $|A|$ denotes the diameter of set $A$.

We denote the collection of  such Moran sets by
\begin{equation}\label{eq:Moran-1}
{\mathscr F}(J,\{n_k\}_{k\ge 1},\{r_k\}_{k\ge 1}),
\end{equation}
see \cite{FWW99,HRWW00,W01}.
 Let $F$ be a homogeneous Moran set in \eqref{eq:Moran-1},  we define  $\Delta_1=|J|(1-n_1r_1)$,
$$\Delta_k=|J|r_1\dots r_{k-1}(1-n_kr_k), \ \forall k\geq 2$$
and we call $\Delta_k$ \emph{the total gap of rank $k$} of $F$.

\medskip

For $E, F\subset \R$, we denote $\text{dist}(E,F)=\inf\{|x-y|;~x\in E, y\in F\}$. 

\begin{defi}\emph{
We say a Moran set $F$ satisfies the \emph{weak separation condition} (WSC), if there is a constant $\eta_0>0$ such that for any $k\ge 0$, $\boldsymbol{\sigma}\in\Sigma^k$ and $  i\neq j$,
\begin{equation*}
 \text{either }J_{\boldsymbol{\sigma}*i}\cap J_{\boldsymbol{\sigma}*j}\neq\emptyset \text{ or } \text{dist}(J_{\boldsymbol{\sigma}*i},J_{\boldsymbol{\sigma}*j})>\eta_0\left (|J_{\boldsymbol{\sigma}}|-\sum_{i=0}^{n_{k+1}-1} |J_{\boldsymbol{\sigma}*i}|\right ).
\end{equation*}
}
\end{defi}


Our main result is the following.

\begin{thm}\label{thm:measure-1}
Let $E,F \in {\mathscr F}([0,1],\{n_k\}_{k\ge 1},\{r_k\}_{k\ge 1})$. Let $f:E\to F$ be a bi-Lipschitz map. Suppose that

\emph{(i)} $\sup_{k\ge 1}n_k<\infty$;

\emph{(ii)} $\gamma=\inf_{k\geq 1}\frac{\Delta_{k}}{\Delta_{k+1}}>1$;

\emph{(iii)} both $E$ and $F$ satisfy the \emph{WSC}.\\
Then for any cylinder $E_{\boldsymbol{\sigma}}$ of $E$,  there exists a sub-cylinder $E_{\boldsymbol{\sigma\tau}}$ of $E_{\boldsymbol{\sigma}}$ such that $df^*\nu/d\mu$ is a constant function on $E_{\boldsymbol{\sigma\tau}}$, equivalently
$$
\frac{\nu(f(\mathbf B))}{\mu(\mathbf B)}=\frac{\nu(f(E_{\boldsymbol{\sigma\tau}}))}{\mu( E_{\boldsymbol{\sigma\tau}})}
$$
for any Borel set $\mathbf B\subset E_{\boldsymbol{\sigma\tau}}$.
\end{thm}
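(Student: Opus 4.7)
The plan is to locate a sub-cylinder $E_{\boldsymbol{\sigma\tau}}$ on which $f$ admits a rigid cylinder-to-cylinder structure: the image $f(E_{\boldsymbol{\sigma\tau}\boldsymbol\rho})$ is, modulo a $\nu$-null set, a union of cylinders of $F$ whose combinatorial pattern replicates in the same way under passage to every sub-cylinder indexed by $\boldsymbol\rho$. Once this rigidity is in place, the ratio $\nu(f(\mathbf B))/\mu(\mathbf B)$ is forced to be constant on Borel subsets $\mathbf B\subset E_{\boldsymbol{\sigma\tau}}$, since the two uniform Bernoulli measures each assign mass $(n_1\cdots n_k)^{-1}$ to every rank-$k$ cylinder, so the cardinality count in the pattern translates directly into the ratio.

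The first step is a \emph{scale-matching} argument. Writing $d_k=|J|r_1\cdots r_k$ for the common diameter of rank-$k$ cylinders, bi-Lipschitzness gives $|f(E_{\boldsymbol\sigma})|\in[C^{-1}d_k,Cd_k]$ for every rank-$k$ cylinder $E_{\boldsymbol\sigma}$; since $r_k\ge 1/n_k\ge 1/N$ with $N:=\sup_k n_k$, consecutive scale ratios $d_k/d_{k+1}$ are bounded, so there is an integer $a=a(C,N)$ such that $f(E_{\boldsymbol\sigma})$ is contained in $O(1)$ rank-$(k-a)$ cylinders of $F$ and meets $O(1)$ rank-$(k+a)$ cylinders. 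The goal is to upgrade this soft inclusion to an exact cylinder decomposition.

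The key is the \emph{rigidity lemma}: for every cylinder $E_{\boldsymbol\sigma}$ of sufficiently large rank $k$, there is a bounded integer $b=b(\boldsymbol\sigma)$ such that $f(E_{\boldsymbol\sigma})$ coincides, modulo a $\nu$-null set, with the union of those rank-$(k+b)$ cylinders of $F$ that it meets. The mechanism combines (ii) and (iii): by the WSC, two sibling rank-$\ell$ cylinders of $F$ are either touching or separated by a gap exceeding $\eta_0\Delta_\ell$; by the geometric gap condition, $\Delta_\ell\ge\gamma^j\Delta_{\ell+j}$, so once $j$ is large this sibling gap dominates the total span of any configuration of rank-$(\ell+j)$ cylinders that could straddle it. Consequently, at level $k+b$ with $b$ large enough depending on $C,\gamma,\eta_0$, no rank-$(k+b)$ cylinder of $F$ can be partially cut by the boundary of $f(E_{\boldsymbol\sigma})$; the symmetric application of the same argument to $f^{-1}$ pins $b$ down uniformly.

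With the rigidity lemma in hand, attach to each $E_{\boldsymbol\sigma}$ a combinatorial \emph{type} $T(\boldsymbol\sigma)$ recording the finite list of rank-$(k+b(\boldsymbol\sigma))$ cylinders of $F$ whose union equals $f(E_{\boldsymbol\sigma})$, normalized by translation and by the bounded offset $b$. Hypothesis (i) together with the bounded range of $b$ leaves only finitely many possible types. Descending the infinite sub-cylinder tree of the given $E_{\boldsymbol\sigma}$ and applying a K\"onig-style pigeonhole, one produces $E_{\boldsymbol{\sigma\tau}}$ together with a self-similar sequence of sub-cylinders $E_{\boldsymbol{\sigma\tau}\boldsymbol\rho}$ all sharing the same type, so that each descendant splits under $f$ into the same combinatorial pattern. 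A straightforward counting argument — tracking how many rank-$(k+b+|\boldsymbol\rho|)$ cylinders appear in each decomposition — then yields $\nu(f(E_{\boldsymbol{\sigma\tau}\boldsymbol\rho}))/\mu(E_{\boldsymbol{\sigma\tau}\boldsymbol\rho})=\nu(f(E_{\boldsymbol{\sigma\tau}}))/\mu(E_{\boldsymbol{\sigma\tau}})$ for every sub-cylinder $\boldsymbol\rho$, and a standard monotone-class/outer-regularity extension promotes this identity from sub-cylinders to arbitrary Borel $\mathbf B\subset E_{\boldsymbol{\sigma\tau}}$. \emph{The main obstacle} is the rigidity lemma: transferring the hierarchical gap structure from $E$ through $f$ to $F$ is where all three hypotheses intervene simultaneously — (ii) makes level-$\ell$ gaps geometrically dominate finer-rank configurations, (iii) converts total gaps into actual sibling separations, and (i) keeps the type set finite for the final pigeonhole.
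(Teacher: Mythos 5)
Your first two steps track the paper closely: the scale-matching observation is essentially the paper's Theorem 2.1 (the uniform bound $\alpha^{-1}\mu(\mathbf B)\le\nu(f(\mathbf B))\le\alpha\mu(\mathbf B)$), and your rigidity lemma is the paper's Lemma 2.3, proved exactly by the mechanism you describe (WSC gives sibling gaps of size $\eta_0\Delta_k$, condition (ii) makes finer-rank configurations $\frac{2}{\gamma-1}\Delta_{k+p_0}$-connected, and choosing $p_0$ with $(\gamma-1)\gamma^{p_0}>2C/\eta_0$ forces each fine piece to lie wholly inside or wholly outside the image). One caveat: the paper states this decomposition for \emph{connected components} of the approximations $\mathbf E_k$, $\mathbf F_k$, not for individual cylinders. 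Since the WSC explicitly permits sibling basic intervals to touch, a single cylinder need not be separated from its neighbour by any gap, and your rigidity lemma as stated for cylinders can fail (the image of one cylinder of a touching pair need not be an exact union of cylinders). This is repairable by working with components, as the paper does.

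The genuine gap is in your final step. First, "same type" does not imply "same ratio": if a rank-$k$ cylinder maps onto $m$ cylinders of rank $k+b$ and a rank-$(k+1)$ descendant maps onto $m$ cylinders of rank $k+1+b$ with the identical pattern, the ratios are $m/(n_{k+1}\cdots n_{k+b})$ and $m/(n_{k+2}\cdots n_{k+b+1})$, which differ unless the sequence $\{n_j\}$ cooperates. Second, and more seriously, a K\"onig/pigeonhole argument on finitely many types produces an infinite \emph{branch} (or infinitely many cylinders) of constant type, whereas the conclusion requires a sub-cylinder $E_{\boldsymbol{\sigma\tau}}$ such that \emph{every} descendant in the entire subtree below it yields the same ratio --- a monochromatic subtree, which pigeonhole does not deliver (a depth-parity colouring already defeats it). The paper closes this gap with an extremal argument instead: set $\chi=\sup_U\varphi(U)$ over components $U$ with $\varphi(U)=\nu(f(U\cap E))/\mu(U)$; if $\varphi$ is nowhere locally constant, pick $U$ with $\varphi(U)>\chi(1-\varepsilon)$ and a direct offspring $U_{j_0}$ with $\varphi(U_{j_0})>\varphi(U)$; the decomposition lemma shows $\varphi(U_{j_0})/\varphi(U)$ is a quotient of positive integers whose denominator is at most $8\beta^{2p_0+5}$, so being $>1$ forces it to be $\ge 1+\frac{1}{8\beta^{2p_0+5}}$, and with $\varepsilon=\frac{1}{2(1+8\beta^{2p_0+5})}$ this pushes $\varphi(U_{j_0})$ above $\chi$, a contradiction. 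Some such quantization-plus-supremum device (or an equivalent substitute) is needed; without it your argument does not reach the conclusion.
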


\section{\textbf{Proof of Theorem \ref{thm:measure-1}} }

Measure preserving properties of bi-Lipschitz maps of self-similar sets and self-affine sets
have been studied in \cite{FM92,HRWX24,RYZ21,XR08 }. In this section, we investigate such property related to homogenous Moran sets.

In this section, unless otherwise specified, we always assume that $E,F$ are two homogeneous Moran sets in  ${\mathscr F}([0,1],\{n_k\}_{k\ge 1}, \{r_k\}_{k\ge 1})$.
Let $\mu$ be the Borel probability measure supported on $E$   such that for each $k$-th cylinder $E_{\boldsymbol{\sigma}}$,
$$\mu(E_{\boldsymbol{\sigma}}) =\frac{1}{n_1\dots n_k}.$$
We call $\mu$ the \emph{uniform Benoulli measure} of $E$. Let $\nu$ be the uniform Bernoulli measure of $F$.

\begin{thm}\label{thm-equivalence}
Let $f:E\to F$ be a bi-Lipschitz map, then $f^*\nu$ is equivalent to $\mu$, that is, there exists $\alpha>0$ such that
$$
  \alpha^{-1}\mu(\mathbf B)\le \nu(f(\mathbf B))\le \alpha \mu(\mathbf B),
$$
for any Borel set $\mathbf B\subset E$.
\end{thm}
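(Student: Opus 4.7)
The plan is to first prove the inequality $\nu(f(E_{\boldsymbol{\sigma}}))\leq\alpha\,\mu(E_{\boldsymbol{\sigma}})$ on every cylinder of $E$, then promote it to all Borel subsets by outer regularity, and finally invoke the symmetric argument with $f^{-1}$ to obtain the lower bound. Throughout, let $C\ge 1$ denote a bi-Lipschitz constant for $f$.

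The geometric core is a covering observation in $F$: if $I\subset\R$ is any interval of length $L$, then at most $\lfloor L/(r_1\cdots r_k)\rfloor+2$ rank-$k$ basic intervals of $F$ meet $I$, because every such $J_{\boldsymbol{\tau}}$ has length exactly $r_1\cdots r_k$, the family of rank-$k$ basic intervals has pairwise disjoint interiors, and each one meeting $I$ lies in the enlargement of $I$ by $r_1\cdots r_k$ on each side. Applied with $L\le C\,r_1\cdots r_k$ to an interval containing $f(E_{\boldsymbol{\sigma}})$ (using $|f(E_{\boldsymbol{\sigma}})|\le C|E_{\boldsymbol{\sigma}}|\le C\,r_1\cdots r_k$), this shows that $f(E_{\boldsymbol{\sigma}})$ meets at most $C+2$ rank-$k$ cylinders of $F$, each carrying $\nu$-mass $1/(n_1\cdots n_k)=\mu(E_{\boldsymbol{\sigma}})$; hence $\nu(f(E_{\boldsymbol{\sigma}}))\le(C+2)\mu(E_{\boldsymbol{\sigma}})$.

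To pass to a general Borel $B\subset E$, I would invoke outer regularity of $\mu$: for $\varepsilon>0$ pick an open $U\supset B$ in $\R$ with $\mu(U\cap E)<\mu(B)+\varepsilon$ and let $\mathcal D$ be the collection of basic intervals contained in $U$ whose proper ancestors are not contained in $U$. Since any two basic intervals of the Moran construction are either nested or interior-disjoint, and since $r_1\cdots r_k\to 0$ together with $U$ open forces every point of $U\cap E$ to lie in some element of $\mathcal D$, the family $\{E_{\boldsymbol{\sigma}}:J_{\boldsymbol{\sigma}}\in\mathcal D\}$ is a countable essentially disjoint cover of $U\cap E$ with $\mu(U\cap E)=\sum_{J_{\boldsymbol{\sigma}}\in\mathcal D}\mu(E_{\boldsymbol{\sigma}})$ (the shared endpoints are $\mu$-null because $\mu$ is non-atomic). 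Countable subadditivity of $\nu\circ f$ then yields $\nu(f(B))\le(C+2)(\mu(B)+\varepsilon)$, and letting $\varepsilon\to 0$ gives one inequality; applying the same reasoning to $f^{-1}:F\to E$ yields the other, proving the claim with $\alpha$ the maximum of the two resulting constants. I foresee no serious obstacle, since the argument is elementary: the homogeneity of the Moran structure and the uniform Bernoulli nature of $\mu,\nu$ reduce everything to counting cylinders of a given rank. In particular, none of the conditions (i)--(iii) of Theorem \ref{thm:measure-1} plays any role here; they will enter only when one upgrades measure equivalence to the constancy of the Radon--Nikodym derivative on some sub-cylinder.
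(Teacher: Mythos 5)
Your proposal is correct and follows essentially the same route as the paper: the identical counting argument shows that $f(E_{\boldsymbol{\sigma}})$ meets at most $C+2$ cylinders of the same rank in $F$, yielding $\nu(f(E_{\boldsymbol{\sigma}}))\le (C+2)\mu(E_{\boldsymbol{\sigma}})$ on cylinders, with the lower bound obtained by running the argument for $f^{-1}$. The only divergence is the passage from cylinders to general Borel sets: the paper disposes of it with a one-line appeal to the cylinders generating the Borel $\sigma$-algebra, whereas your outer-regularity and maximal-cover argument actually supplies the justification that step requires (an inequality between two measures does not propagate through $\sigma$-algebra generation on its own), so your version is, if anything, more complete.
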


\begin{proof} Let $C$ be a  Lipschitz constant of $f$.

First, we claim that there exists $\alpha>0$ such that for each cylinder $E_{\boldsymbol{\sigma}}$, it holds that
$$
\nu(f(E_{\boldsymbol{\sigma}}))\le \alpha \mu(E_{\boldsymbol{\sigma}}).
$$
Denote $k=|\boldsymbol{\sigma}|$, then $\mu( E_{\boldsymbol{\sigma}})=  \frac{1}{n_1\dots n_k}$
and
\begin{equation*}
  \text{diam}\, f( E_{\boldsymbol{\sigma}})\le C (\text{diam}\,  E_{\boldsymbol{\sigma}})= Cr_1\dots r_k.
\end{equation*}
 Therefore, there are at most $C+2$ cylinders of rank $k$ of $F$ intersecting $f( E_{\boldsymbol{\sigma}})$, so we have
\begin{equation*}
  \nu(f( E_{\boldsymbol{\sigma}}))\le \frac{C+2}{n_1\dots n_k}=(C+2)\mu( E_{\boldsymbol{\sigma}}).
\end{equation*}
The claim is proved.

Since the Borel $\sigma$-algebra of $E$ can be generated by
 the collection of cylinders,
it follows that $\nu(f(\mathbf B))\le \alpha \mu(\mathbf B)$ holds for any Borel set $\mathbf B\subset E$. By changing the role of $E$ and $F$, we can get the other side inequality. This completes the proof of the theorem.
\end{proof}

\begin{defi}[$\delta$-connected]
\emph{ A compact set $X\subset \mathbb{R}$ is said to be \emph{$\delta$-connected}, if for any $x,y\in X$, there exists a sequence $\{x=z_1,\dots,z_n=y\}\subset X$ such that $|z_i-z_{i+1}|\le \delta$ holds for $1\le i\le n-1$. The sequence above is called a \emph{$\delta$-chain} connecting $x$ and $y$.}
\end{defi}

Recall that ${\mathbf E}_k$ is the union of basic intervals of rank $k$ of $E$;  we call  ${\mathbf E}_k$  the $k$-th approximation of $E$. We
 use ${\mathcal E}_k$ to denote the collection of connected component of ${\mathbf E}_k$.
Similarly, we define ${\mathbf F}_k$ and ${\mathcal F}_k$.
Recall that $\Delta_k$
 is the total gap of rank $k$.

\medskip

 The following lemma is a variation of Lemma 3.2 in Falconer and Marsh \cite{FM92}.

\begin{lemma}\label{decomposition-lem-1}
Let $f:E\to F$ be a bi-Lipschitz map with Lipschitz constant $C$. Suppose that both $E$ and $F$ satisfy the \emph{WSC} and
$$
\displaystyle \gamma=\inf_{k\geq 1}\frac{\Delta_{k}}{\Delta_{k+1}}>1.
$$
Then there exists a positive integer $p_0$ such that,
for any $k\ge p_0$ and $I\in {\mathcal E}_k$, there exist  ${J^*}\in {\mathcal F}_{k-p_0}$ and $\{J_j\}_{j=1}^h\subset {\mathcal F}_{k+p_0}$ such that
$\{J_j\}_{j=1}^h\subset J^*$ and
\begin{equation*}
  f(I\cap E)=\bigcup_{j=1}^{h}(J_j\cap F).
\end{equation*}
\end{lemma}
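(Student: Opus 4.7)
My plan is to mirror the strategy of Lemma 3.2 in Falconer and Marsh \cite{FM92}, replacing the self-similar ratio $r$ by the gap sequence $(\Delta_k)$ and exploiting the uniform decay $\gamma>1$ together with the bounded arity $\sup_k n_k<\infty$. The conclusion has two features to prove: a decomposition $f(I\cap E)=\bigcup_j(J_j\cap F)$ into components at the finer level $k+p_0$, and a common enclosure $J^{*}$ at the coarser level $k-p_0$. I would prove the first directly and derive the second by running the same argument for the inverse $f^{-1}$.

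For the decomposition, I would first establish a separation estimate: any two distinct $I_1,I_2\in\mathcal{E}_k$ satisfy $\mathrm{dist}(I_1,I_2)\ge\eta_0\Delta_k$. Indeed, the gap between them is a gap between two sibling basic intervals at some rank $j\le k$, of size at least $\eta_0\Delta_j$ by WSC, and the sequence $(\Delta_j)$ is decreasing thanks to $\gamma>1$. The bi-Lipschitz lower bound for $f$ then yields
\[
\mathrm{dist}\bigl(f(I\cap E),\,f((E\setminus I)\cap E)\bigr)\ge C^{-1}\eta_0\Delta_k.
\]
If some $J'\in\mathcal{F}_{k+p_0}$ contained a point $y_1=f(x_1)$ with $x_1\in I\cap E$ and a point $y_2=f(x_2)$ with $x_2\in E\setminus I$, connectedness in $\mathbf{F}_{k+p_0}$ would force $|J'|\ge|y_1-y_2|\ge C^{-1}\eta_0\Delta_k$. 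Hence it suffices to pick $p_0$ (uniformly in $k$) so that $|J'|<C^{-1}\eta_0\Delta_k$ for every $J'\in\mathcal{F}_{k+p_0}$; then each component $J'$ that meets $f(I\cap E)$ must satisfy $J'\cap F\subset f(I\cap E)$, and the decomposition is immediate.

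For the enclosing $J^{*}$, I would apply the decomposition result to the bi-Lipschitz map $f^{-1}\colon F\to E$, which has Lipschitz constant $C$ and so produces the same $p_0$. Since $k\ge p_0$, rank $k-p_0$ is available: for every $K\in\mathcal{F}_{k-p_0}$ the preimage $f^{-1}(K\cap F)$ is a union of components in $\mathcal{E}_{(k-p_0)+p_0}=\mathcal{E}_k$. Let $J^{*}$ be the (unique) component of $\mathbf{F}_{k-p_0}$ containing $f(x_0)$ for some $x_0\in I\cap E$. Then $I$ must appear among the components making up $f^{-1}(J^{*}\cap F)$, so $I\cap E\subset f^{-1}(J^{*}\cap F)$ and $f(I\cap E)\subset J^{*}$. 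Each $J_j$ from the decomposition is connected, lies in $\mathbf{F}_{k+p_0}\subset\mathbf{F}_{k-p_0}$, and meets $J^{*}$, hence $J_j\subset J^{*}$ as required.

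The main obstacle is the uniform diameter bound $|J'|<C^{-1}\eta_0\Delta_k$ in the decomposition step. In the self-similar case of \cite{FM92} each component of $\mathbf{F}_{k+p_0}$ is a single basic interval of diameter $r^{k+p_0}$, so the bound is immediate; here, however, WSC permits sibling basic intervals to touch, and a component of $\mathbf{F}_{k+p_0}$ may chain several rank-$(k+p_0)$ basic intervals together, so its diameter need not be just $r_1\cdots r_{k+p_0}$. The plan to handle this is to combine $\sup_k n_k\le N$, which bounds the length of such a chain, with the telescoping identity
\[
r_k r_{k+1}\cdots r_{k+p_0-1}(1-n_{k+p_0}r_{k+p_0})=(1-n_kr_k)\,\frac{\Delta_{k+p_0}}{\Delta_k}\le(1-n_kr_k)\gamma^{-p_0},
\]
so that a $p_0$ depending only on $C,\eta_0,N,\gamma$ suffices for the diameter bound uniformly in $k$.
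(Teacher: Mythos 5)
Your overall architecture matches the paper's: a separation bound $\mathrm{dist}(I_1,I_2)\ge \eta_0\Delta_k$ for distinct components of ${\mathbf E}_k$, an ``all or nothing'' claim for components of ${\mathbf F}_{k+p_0}$ against $f(I\cap E)$, and an application of the same claim to $f^{-1}$ to produce $J^*$. The separation bound and the $J^*$ step are fine. But the mechanism you use for the central claim has a genuine gap: you reduce it to the uniform diameter bound $|J'|<C^{-1}\eta_0\Delta_k$ for $J'\in{\mathcal F}_{k+p_0}$, and that bound is false in general, no matter how large $p_0$ is. Even a single basic interval of rank $k+p_0$ has diameter $r_1\cdots r_{k+p_0}$, so you would need $r_k\cdots r_{k+p_0}\le C^{-1}\eta_0(1-n_kr_k)$. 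Take $n_k\equiv 2$ and $r_k=(1-4^{-k})/2$ with the two children placed at the ends of their parent: this satisfies the WSC, gives $\gamma\ge 8$, yet $r_k\cdots r_{k+p_0}$ is comparable to $2^{-(p_0+1)}$ while $1-n_kr_k=4^{-k}\to 0$, so $|J'|/\Delta_k\to\infty$ as $k\to\infty$ for every fixed $p_0$. Your proposed rescue via the telescoping identity does not close this: that identity controls $r_k\cdots r_{k+p_0-1}(1-n_{k+p_0}r_{k+p_0})$, i.e.\ $\Delta_{k+p_0}/(r_1\cdots r_{k-1})$, and the factor $1-n_{k+p_0}r_{k+p_0}$ can be arbitrarily small, so no bound on $r_k\cdots r_{k+p_0-1}$ alone (hence on the diameter of $J'$) follows. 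Likewise, bounding the length of a chain of touching basic intervals by $\sup_k n_k$ does not help, since already one basic interval is too long.

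The repair --- and the paper's actual argument --- is to replace the diameter of $J'$ by the $\delta$-connectedness of $J'\cap F$. Although $J'$ may be long, the gaps of $F$ inside $J'$ come from ranks greater than $k+p_0$, so $J'\cap F$ is $\delta$-connected with $\delta\le 2\sum_{j\ge 1}\Delta_{k+p_0+j}\le \frac{2}{\gamma-1}\Delta_{k+p_0}$. Hence $f^{-1}(J'\cap F)$ is $C\delta$-connected; since it lies in $E$ and distinct components of ${\mathbf E}_k$ are separated by at least $\eta_0\Delta_k$, a $C\delta$-chain cannot pass from $I$ to another component once $\frac{2C}{\gamma-1}\gamma^{-p_0}<\eta_0$, which is exactly the paper's choice $(\gamma-1)\gamma^{p_0}>2C/\eta_0$. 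Your two-point observation ($y_1,y_2\in J'$ forces $|J'|\ge C^{-1}\eta_0\Delta_k$) is correct but extracts too little: the relevant smallness is not that of $J'$ but that of the jumps inside $J'\cap F$.
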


\begin{proof} Let $\eta_0$ be a common constant in the weak separation condition of $E$ and $F$. Let $p_0$ be the smallest integer satisfying
 $(\gamma-1)\gamma^{p_0}>\frac{2C}{\eta_0}$.

Let $U, U'$ be two distinct elements of ${\mathcal E}_k$. By the weak separation condition,
we have
$$
\text{dist}\, (U,U')\geq \min\{\eta_0 \Delta_j; 1\leq j\leq k\}=\eta_0\Delta_k.
$$
On the other hand, for any $U\in {\mathcal E}_k$, $U\cap E$ is $(\frac{2}{\gamma-1}\Delta_k)$-connected since the maximal gap of $U\cap E$ is no larger than
$
2\sum_{j=1}^{\infty}\Delta_{k+j}\le 2\sum_{j=1}^{\infty}\frac{\Delta_k}{\gamma^j}=\frac{2}{\gamma-1}\Delta_{k}.
$
Moreover, these two facts are also true for ${\mathcal F}_k$.

Pick any $J\in{\mathcal F}_{k+p_0}$. We claim that
$$\text{either $(J\cap F)\subset f(I\cap E)$},\ \text{or $(J\cap F)\cap f(I\cap E)=\emptyset$}.$$
  First, $J\cap F$ is $(\frac{2}{\gamma-1}\Delta_{k+p_0})$-connected  implies
$f^{-1}(J\cap F)$ is  $(\frac{2C}{\gamma-1}\Delta_{k+p_0})$-connected.
By our choice of $p_0$, we have
$(\frac{2C}{\gamma-1}\Delta_{k+p_0})\leq (\frac{2C}{\gamma-1}\gamma^{-p_0}\Delta_{k})  <\eta_0 \Delta_k$,
which implies that either $f^{-1}(J\cap F)\subset I$
or $f^{-1}(J\cap F)\cap I=\emptyset.$
The claim is proved.

Applying the above claim to the map $f^{-1}:F\to E$, there exists $J\in{\mathcal F}_{k-p_0}$ such that $ f(I\cap E)\subset J\cap F$. This finishes the proof of the lemma.
\end{proof}

 We denote by $\mathcal{C}_{E}=\bigcup_{k\ge 0}{\mathcal E}_k$, where $ {\mathcal E}_{0}=\{[0,1]\}$ by convention.
Let $U,U'\in \mathcal{C}_{E}$, we say $U'$ is an offspring of $U$ if $U'\subset U$, and it is called a direct offspring of $U$ if the rank of $U$ equals the rank of $U'$ plus $1$.

\medskip

\begin{proof}[\textbf{Proof of Theorem \ref{thm:measure-1}.}] Let $E_{\boldsymbol\sigma}$ be a cylinder of $E$.
Let $U_0$ be a component in ${\mathcal C}_E$ such that $U_0\cap E\subset E_{\boldsymbol \sigma}$. Set
$$
{\mathcal C}_{E, U_0}=\{V\in {\mathcal C}_{E};~ V\subset U_0\}.
$$
Define
$$\varphi(U)=\frac{\nu(f(U\cap E))}{\mu(U)},\ \text{where $U\in\mathcal{C}_{E, U_0}$.}$$
By Theorem \ref{thm-equivalence}, we have
$$\chi=\sup_{U\in\mathcal{C}_{E,U_0}}\varphi(U)<\infty.$$

Since each $U\in \mathcal{C}_{E,U_0}$ contains a basic interval and vice versa, the theorem holds if and only if there exists $U\in \mathcal{C}_{E,U_0}$ such that $f$ is measure preserving on $U\cap E$.

Suppose on the contrary that $f|_{U\cap E}$ is not measure preserving for any $U\in\mathcal{C}_{E,U_0}$. Then we can get that $\varphi(U)<\chi$ for all $U\in\mathcal{C}_{E,U_0}$, for otherwise, $f|_{U\cap E}$ is measure preserving for these $U$ such that $\varphi(U)$ attains $\chi$.

Denote $\beta=\sup_{k\ge 1}n_k$.
Set $\varepsilon=\frac{1}{2(1+8\beta^{2p_0+5})}$, where $p_0$ is the constant in Lemma \ref{decomposition-lem-1}. By the definition of $\chi$, there is $U\in \mathcal{C}_{E, U_0}$ such that
$$\chi(1-\varepsilon)<\varphi(U)<\chi.$$

Let $k$ be the rank of $U$ and let $U_1,\dots,U_{\ell}$ be the direct offsprings of $U$, we have either $\varphi(U)=\varphi(U_j)$ for all $1\le j\le \ell$, or there exists
$j_0\in\{1,\dots,\ell\}$ such that $\varphi(U_{j_0})>\varphi(U)$. Since $f|_{U\cap E}$ is not measure preserving, there exists a sequence $U=V_0,V_1,\dots,V_{t-1},V_t$ in $\mathcal{C}_{E, U_0}$ such that $V_j$ is directed offspring of $V_{j-1}$ for $j=1,\dots, t$ and $\varphi(U)=\varphi(V_0)=\varphi(V_1)=\dots=\varphi(V_{t-1})\neq \varphi(V_t)$. Without loss of generality, we may assume that $\varphi(V_t)>\varphi(V_{t-1})$ and we can replace $U$ by $V_{t-1}$ to start our discussion.

Next we estimate $\varphi(U_{j_0})=\frac{\nu(f(U_{j_0}\cap E))}{\mu(U_{j_0})}.$ Notice that $\varphi(U)=\frac{\nu(f(U\cap E))}{\mu(U)},$ we have
\begin{equation}\label{varphiU}
 \varphi(U_{j_0})=\frac{\nu(f(U_{j_0}\cap E))}{\nu(f(U\cap E))}\cdot\frac{\mu(U)}{\mu(U_{j_0})}\cdot\varphi(U).
\end{equation}
By Lemma \ref{decomposition-lem-1}, $f(U\cap E)$ can be decomposed into
$$
f(U\cap E)=\bigcup_{j=1}^{h}({J}_j\cap F),
$$
where $ \{J_j\}_{j=1}^{h}\subset  {\mathcal F}_{k+p_0}$, and are offsprings of $J^*\in {\mathcal F}_{k-p_0}$. Since each $J\in {\mathcal F}_{k-p_0}$ contains at most $2n_{k-p_0}$  basic intervals  of rank $k-p_0$, we have
$$h\le (2n_{k-p_{0}})\prod_{j=1}^{2p_0}n_{k-p_{0}+j}\le 2\beta^{2p_0+1}.$$
 Applying Lemma \ref{decomposition-lem-1} to $U_{j_0}$, there exists $\{ {I}_j\}_{j=1}^{s}\subset {\mathcal F}_{k+1+p_0}$ such that
$f(U_{j_0}\cap E)=\bigcup_{j=1}^{s}({I}_j\cap F)$.

For any connected component $Q$ of rank $k$, denote by $\#_{k}{Q}$ the number of $k$-th basic intervals contained in $Q$. We have
$$\nu(f(U\cap E))=\sum_{j=1}^{h}\nu({J}_{j}\cap F)=\sum_{j=1}^{h}\frac{\#_{(k+p_0)} {J}_j}{n_1\dots n_{k+p_0}},$$
and
$$\nu(f(U_{j_0}\cap E))=\sum_{j=1}^{s}\nu( {I}_{j}\cap F)=\sum_{j=1}^{s}\frac{\#_{(k+p_0+1)} {I}_j}{n_1\dots n_{k+p_0}n_{k+p_0+1}}.$$
Substituting these two relations to \eqref{varphiU}, we obtain
$$
\frac{ \varphi(U_{j_0})}{ \varphi(U)}=\frac{\sum_{j=1}^{s} \#_{(k+p_0+1)} {I}_j  }{\sum_{j=1}^{h} \#_{(k+p_0)} {J}_j  }\cdot\frac{1}{n_{k+p_0+1}}\cdot\frac{\#_{k}U}{\#_{(k+1)} U_{j_0}}\cdot\frac{1}{n_{k+1}}.
$$
Notice that the denominator of the right hand side is no larger than $8\beta^{2p_0+5}$,
 by $\varphi(U_{j_0})>\varphi(U)$, we obtain
$$
  \varphi(U_{j_0})>(1+\frac{1}{8\beta^{2p_0+5}})\varphi(U)>\chi,
$$
which contradicts the definition of $\chi$. This completes the proof of the Theorem.
\end{proof}


\begin{thebibliography}{99}
\addcontentsline{toc}{chapter}{Bibliography}

%




\bibitem{CP88} D. Cooper and T. Pignataro, {\it On the shape of Cantor sets}, J. Differential Geom., \textbf{28} (1988), 203-221.



\bibitem{FM92} K. J. Falconer and  D. T. Marsh, {\it On the Lipschitz equivalence of Cantor sets}, Mathematika, \textbf{39} (1992), 223-233.

\bibitem{FWW99} D. J. Feng, Z. Y. Wen and J. Wu, {\it Some dimensional results for homogeneous Moran sets}, Sci. China Ser. A, \textbf{40} (1997), 475-482.

\bibitem{HRWW00} S. Hua, H. Rao, Z. Y. Wen and J. Wu, {\it On the structures and dimensions of Moran sets}, Sci. China Ser. A, \textbf{43} (2000), 836-852.

\bibitem{HRWX24}
L. Y. Huang, H. Rao, Z. Y. Wen and Y. L. Xu, \textit{Box-counting measure of metric spaces}, Preprint (2024) (arXiv:2111.00752 [math.MG]).



\bibitem{ML10} M. Llorente and P. Mattila, {\it Lipschitz equivalence of subsets of self-conformal sets}, Nonlinearity, \textbf{23} (2010), 875-882.

\bibitem{LL13} J. J. Luo and K. S. Lau, {\it Lipschitz equivalence of self-similar sets and hyperbolic boundaries}, Adv. Math., \textbf{235} (2013), 555-579.




\bibitem{RRW12} H. Rao, H. J. Ruan and Y. Wang, {\it Lipschitz equivalence of Cantor sets and algebraic properties of contraction ratios}, Trans. Amer. Math. Soc., \textbf{364} (2012), 1109-1126.

\bibitem{RRW13} H. Rao, H. J. Ruan and Y. Wang, {\it Lipschitz equivalence of self-similar sets: algebraic and geometric properties}, Contemp. Math., \textbf{600} (2013) 349-364.

\bibitem{RRX06}  H. Rao, H. J. Ruan and L. F. Xi, {\it Lipschitz equivalence of self-similar sets}, C. R. Math. Acad. Sci. Paris, \textbf{342} (2006), 191-196.

\bibitem{RYZ21} H. Rao, Y. M. Yang and Y. Zhang, {\it Invariance of multifractal spectrum of uniform self-affine measures and its applications},  Preprint (2021) (arXiv:2005.07451 [math.DS]).



\bibitem{RWX14} H. J. Ruan, Y. Wang and L. F. Xi, {\it Lipschitz equivalence of self-similar sets with touching structures}, Nonlinearity, \textbf{27} (2014), 1299-1321.



%
%
%



\bibitem{W01} Z. Y. Wen, {\it Moran sets and Moran classes}, Chinese Sci. Bull., \textbf{46} (2001), 1849-1856.

\bibitem{X10} L. F. Xi, {\it Lipschitz equivalence of dust-like self-similar sets},
Math. Z., \textbf{266} (2010), 683-691.

\bibitem{XR08} L. F. Xi and H. J. Ruan, {\it Lipschitz equivalence of self-similar sets satisfying the strong separation condition}, Acta Math. Sinica (Chinese Ser.), \textbf{51} (2008), 493-500.

\bibitem{XY21} L. F. Xi and Y. Xiong, {\it Algebraic criteria for Lipschitz equivalence of dust-like self-similar sets}, J. Lond. Math. Soc., \textbf{103} (2021), 760-780.


\end{thebibliography}
\end{document}